\newtheorem{theorem}{Theorem}
\newtheorem{remark}{Remark}
 \newtheorem{proposition}{Proposition}
\newcommand{\R}{\mathbb{R}}
\renewcommand{\H}{\mathbb{H}}
\newcommand{\HD}{{\mathbb H}{(\mathbb D})}
\newcommand{\HU}{{\mathbb H}{(\mathbb U})}
\begin{document}
	
\title{Some extensions of quaternions and symmetries of simply connected space forms}

\author{Gerardo Arizmendi\thanks{gerardo.arizmendi@udlap.mx} \and Marco Antonio P\'{e}rez-de la Rosa\thanks{marco.perez@udlap.mx}}

\date{\small Department of Actuarial Sciences, Physics and Mathematics.\\ Universidad de las Am\'{e}ricas Puebla.\\
San Andr\'{e}s Cholula, Puebla. 72810. M\'{e}xico.}
\maketitle
	
\begin{abstract}
		It is known that the groups of Euclidean rotations in dimension 3 (isometries of $S^2$), general Lorentz transformations in dimension 4 (Hyperbolic isometries in dimension 3), and screw motions in dimension 3 can be represented by the groups of unit--norm elements in the algebras of real quaternions, biquaternions (complex quaternions) and dual quaternions, respectively.
		In this work, we present a unified framework that allows a wider scope on the subject and includes all the classical results related to the action in dimension 3 and 4 of unit--norm elements of the algebras described above and the algebra of split biquaternions as particular cases. We establish a decomposition of unit--norm elements in all cases and obtain as a byproduct a new decomposition of the group rotations in dimension 4.
\end{abstract}

\noindent
\textbf{Keywords.}  Quaternions; Euclidean rotations; Lorentz transformations; screw motions; space forms; Hyperbolic space.\\
\textbf{AMS Subject Classification (2010):} 22E43, 30G35, 37E45,70B10, 51M10, 53A17.
	
\section{Introduction}\label{sec:introduction}

In several works (see, e.g., \cite{Majernik,FjelstadGal,Yaglom}) it is established that a two--component number system forming an algebraic ring can be written in the form $z=a+ub$, $a,b\in\mathbb{R}$, where $u$ is certain ``imaginary'' unit. The algebraic structure of a ring demands that the product of each two--component numbers $z_1=a_1+ub_1$ and $z_2=a_2+ub_2$,
\[z_1z_2=(a_1+ub_1)(a_2+ub_2)=a_1a_2+u(a_1b_2+b_1a_2)+u^2b_1b_2,\]
belongs to the ring as well. Therefore, $u^2=\beta+u\gamma$, $\beta,\gamma\in\mathbb{R}$. In \cite{KantorSolodownikov} is proven that any possible two--component number can be reduced to one of the three types enlisted below. The ``imaginary'' unit $u$ will be denoted by $i$, $\varepsilon$ or $j$, respectively, in each of the following cases:
\begin{itemize}
	\item[i)] The complex numbers $\mathbb{C}$, $z=a+ib$ with $i^2=-1$, if $\beta+\gamma^2/4<0$.
	\item[ii)] The dual numbers $\mathbb{D}$, $z=a+\varepsilon b$ with $\varepsilon^2=0$, if $\beta+\gamma^2/4=0$.
	\item[iii)] The hyperbolic numbers $\mathbb{S}$, $z=a+jb$ with $j^2=1$, if $\beta+\gamma^2/4>0$.
\end{itemize}

Just as the geometry of the Euclidean plane can be described with complex numbers, the geometry of the Minkowski plane and Galilean plane can be described with hyperbolic numbers and dual numbers, respectively (see \cite{Yaglom}).

Rotation matrices are used extensively for computations in geometry, kinematics,
physics, computer graphics, animations, and optimization problems
involving the estimation of rigid body transformations. According to \cite{Ozdemir}, in higher dimensional spaces, obtaining a rotation matrix using the inner product is impractical since each column and row of a rotation matrix must be a unit vector perpendicular to all other columns and rows, respectively. These constraints make it difficult to construct a rotation matrix using the inner product. Instead, in higher dimensional spaces, rotation matrices can be generated using various other methods such as unit quaternions, the Rodrigues formula, the Cayley formula, and the Householder transformation.

The Euler--Rodrigues parametrization of rotations is usually used to work out rotation products, nevertheless, if one wants to take a wider view of the subject we shall use the algebra of quaternions and its extensions suchs as biquaternions, dual quaternions and split biquaternions.

Despite the fact that in Hamilton's work the coefficients of quaternions could be either real or complex he did not mention the role of biquaternions in relativistic motions. In \cite{Silberstein} Silberstein showed how the Lorentz transformations could be represented by the action of unit biquaternions on the quaternions, while in \cite{Macfarlane} MacFarlane shown how to formulate Lorentz transformations
using $2 \times 2$ complex matrices (see also \cite{Buchheim,Edmonds,HestenesLasenby,KravchenkoShapiro,Synge,Ward}).

On the other side, in \cite{Clifford} Clifford sketched out a theory were the components of quaternions were dual numbers that allows on to represent three--dimensional rigid motions by means of the group of unit dual quaternions, just as the unit quaternions carry a representation of the group of three--dimensional Euclidean rotations.

Just as real quaternions give a computationally--efficient algorithm for dealing with rigid--body rotations in real time, dual quaternions give a com\-pu\-ta\-tiona\-lly--efficient algorithm for dealing with rigid-body motions that also include translations, such as the motion of joints in manipulators (see \cite{DooleyMcCarthy,KavanCollinsSullivanZara,LeeMesbahiv}).

For an account of the basic notions of kinematical states represented by the groups of unit-norm elements in the algebras of real, dual, complex, and complex dual quaternions we refer the reader to \cite{Delphenich}.

In \cite{Girard} is shown how various physical covariance groups such as $\text{SO}(3)$, the Lorentz group, the general theory of relativity group, the Clifford Algebra, $\text{SU}(2)$ and the conformal group can be related to the quaternion group. Furthermore, as applications, the quaternion calculus is introduced in crystallography, the kinematics of rigid body motion, the Thomas precession, the special theory of relativity, classical electromagnetism, the equation of motion of the general theory of relativity and Dirac's relativistic wave equation. Also, in \cite{HitzerNittaKuroe} and its references one can find a survey on the development of Clifford’s geometric algebra and some of its engineering applications, demostrating the benefit of developing problem solutions in a unified framework for algebra and geometry with the widest possible scope: from quantum computing and electromagnetism to satellite navigation, from neural computing to camera geometry, image processing, robotics and beyond (see also \cite{Sommer}).

The primary aim of the present work is to give a brief but complete account of the use of the extensions of quaternions in geometry. The general picture is the following. Starting from an 8 dimensional extension of quaternions, we restrict to the $6$ dimensional group given of elements such that $Q(q)=1$, where the quadratic form is defined by $Q(q)=q\overline q$. We define an action of this 6 dimensional group on some 4 dimensional subspace which we identify with $(\R^4,Q)$. Depending on the extension we obtain the Minkowski space, where the action are by Lorentz transformations, the Galilean four dimensional space, where the actions is by rotations and shears, and the Euclidean 4 dimensional space, where de action is by rotations. Moreover, by restricting to a subset we obtain the symply connected space forms $H^3$, $E^3$ and $S^3$ and the corresponding actions is by orientation preserving isometries. We work with biquaternions, dual-quaternions and split-biquaternions in a unified way, etending the known results to the case Galilean four dimensional space and rotations in $\mathbb R^4$ (orientation preserving isometries of $S^3$), giving a general picture of the construction. Our proofs are general in the sense that we threath the three cases together and we see the implications in each one, afterwards. 

The article is written as follows, Section \ref{sec:preliminaries} is devoted to some preliminary results. In Section \ref{sec:action}, we state and proof our first resuts in a general way, including a decompostion theorem (Theorem \ref{decomposition}), we also study the action of unit elements on some space isomorphic to $\R^4$. Next, on Section \ref{sec:distance}, we define a distance on a subset of $\R^4$ and prove that the action restricted to this space is by isometries. Finally, in Section \ref{sec:spaceforms}, we study every case in detail, explaining the geometry involved.

\section{Preliminaries of quaternionic analysis and its extensions}\label{sec:preliminaries}

Let $\mathbb{H}$, $\mathbb{H}(\mathbb{C})$, $\mathbb{H}(\mathbb{D})$ and $\mathbb{H}(\mathbb{S})$ denote the sets of real quaternions, biquaternions, dual quaternions and split biquaternions, respectively. If $q\in\mathbb{H}$, $q\in\mathbb{H}(\mathbb{C})$, $\mathbb{H}(\mathbb{D})$ or $\mathbb{H}(\mathbb{S})$ then 
\[q=q_0\mathbf{i}_0+q_1\mathbf{i}_1+q_2\mathbf{i}_2+q_3\mathbf{i}_3,\]
where $\mathbf{i}_0=1$, $\mathbf{i}_1^2=\mathbf{i}_2^2=\mathbf{i}_3^2=-1$ and satisfy the following multiplication rules \[\mathbf{i}_1\cdot \mathbf{i}_2=-\mathbf{i}_2\cdot \mathbf{i}_1=\mathbf{i}_3;\quad \mathbf{i}_2\cdot \mathbf{i}_3=-\mathbf{i}_3\cdot \mathbf{i}_2=\mathbf{i}_1;\quad \mathbf{i}_3\cdot \mathbf{i}_1=-\mathbf{i}_1\cdot \mathbf{i}_3=\mathbf{i}_2.\]

The coefficients $\{q_k\}\subset\mathbb{R}$ if $a\in\mathbb{H}$,
$\{q_k\}\subset\mathbb{C}$ if $a\in\mathbb{H}(\mathbb{C})$, $\{q_k\}\subset\mathbb{D}$ if $a\in\mathbb{H}(\mathbb{D})$ and $\{q_k\}\subset\mathbb{S}$ if $a\in\mathbb{H}(\mathbb{S})$.

Throughout the text we will denote any of the three units $i$, $\varepsilon$ or $j$ by $u$ and any of the rings $\mathbb{C}$, $\mathbb{D}$ or $\mathbb{S}$ by $\mathbb{U}$. By definition, $u$ commutes with all the quaternionic imaginary units $\mathbf{i}_1,\mathbf{i}_2,\mathbf{i}_3$.

Denoting for $q\in\mathbb{H}$ or $\mathbb{H}(\mathbb{U})$, $\vec{q}:=q_1\mathbf{i}_1+q_2\mathbf{i}_2+q_3\mathbf{i}_3$, we can write 
\[q=q_0+\vec{q},\]
where $\mathrm{Sc}(q):=q_0$ will be called the scalar part and $\mathrm{Vec}(q):=\vec{q}$ the vector part of the quaternion.

Also, if $q\in\mathbb{H}(\mathbb{U})$ and $q_k=\alpha_k + u \beta_k $  with  $\{ \alpha_k , \beta_k \}  \subset \mathbb{R} $, then:

\begin{align*}
q  & =    \sum_{k=0}^3 q_k \mathbf{i}_k = \sum_{k=0}^3 \left( \alpha_k + u \beta_k  \right) \mathbf{i}_k \\
& =  \sum_{k=0}^3 \left( \alpha_k \mathbf{i}_k + u \beta_k \mathbf{i}_k  \right) \\  & =: \alpha + u \beta =  \left(\alpha_0 + \vec{\alpha} \right) + u \left( \beta_0 + \vec{\beta} \right)\\  
& =  \left(  \alpha_0 + u \beta_0  \right)  +  \left( \vec{\alpha}  +  u  \vec{\beta} \right).
\end{align*} 

For any $p,q\in\mathbb{H}$ or $\mathbb{H}(\mathbb{U})$:
\[
p\,q := p_0\,q_0 - \langle\vec{p},\vec{q}\rangle + p_0\,\vec{q} + q_0\,\vec{p} + [\vec{p},\vec{q}],
\]
where
\[
\langle\vec{p},\vec{q}\rangle := \sum_{k=1}^3 p_k\,q_k,\,\,\;\;\; [\vec{p},\vec{q}]:=
\left |
\begin{array}{rrr}
\mathbf{i}_1 & \mathbf{i}_2 & \mathbf{i}_3\\
p_1& p_2 & p_3\\
q_1& q_2 & q_3
\end{array}
\right |.
\]

In particular, if $p_0=q_0=0$ then $p\,q :=  - \langle\vec{p},\vec{q}\rangle + [\vec{p},\vec{q}]$.

Let $q \in \mathbb{H}(\mathbb{U})$. We define the following involutions:
		\begin{align} 
		q^{\ast} &:= \alpha - u\beta = \sum_{k=0}^3{\left(\alpha_k - u \beta_k \right) \mathbf{i}_k};\label{Complex_Conjugation}\\
		\overline{q} &:= \overline{\alpha} + u\overline{\beta} = \sum_{k=0}^3{\left(\alpha_k + u\beta_k \right)\overline{\mathbf{i}_k}};\label{Quaternionic_Conjugation}\\
		\overline{q^{\ast}} &:= \overline{\alpha} - u\overline{\beta} = \sum_{k=0}^3{\left( \alpha_k - u \beta_k\right) \overline{\mathbf{i}_k}}.\label{Bi-Quaternionic_Conjugation}
		\end{align}

Observe that $\overline{q^{\ast}} = \overline{(q^{\ast})} = (\overline{q})^\ast$.

The involution (\ref{Complex_Conjugation}) satisfies the following properties:
\begin{enumerate}
	\item[a)] $(p\pm q)^{\ast} = p^{\ast}\pm q^{\ast}$,
	\item[b)] $(pq)^{\ast} = p^{\ast}q^{\ast}$,
	\item[c)] $\left(q^{\ast}\right)^\ast = q$,
	\item[d)] $\left(q^{\ast}q \right)^\ast = q q^{\ast}$.
	\item[e)] $\alpha=\frac{1}{2}(q+q^\ast)$.
	\item[f)] $\beta=\frac{1}{2u}(q-q^\ast)$.
\end{enumerate}

A direct observation shows that
\begin{equation*}
	q^{\ast}q = \alpha^2  -u^2\beta^2 + u\left(\alpha\beta - \beta \alpha \right)\in\mathbb{H}(\mathbb{U}),
\end{equation*}
and
\begin{equation*}
	q q^{\ast} = \alpha^2 - u^2\beta^2 - u\left(\alpha\beta - \beta\alpha \right)\in\mathbb{H}(\mathbb{U}).
\end{equation*}

The involution (\ref{Quaternionic_Conjugation}) satisfies the following properties:
\begin{enumerate}
	\item[a)] $\overline{p \pm q} = \overline{p} \pm \overline{q}$,
	\item[b)] $\overline{p q} = \overline{q}\, \overline{p}$,
	\item[c)] $\overline{\overline{q}} = q$,
	\item[d)] $q \overline{q} = \overline{q}q$,
	\item[e)] ${\rm Sc}(q) = \frac{1}{2}\left(q+\overline{q}\right)$,
	\item[f)] ${\rm Vec}(q) = \frac{1}{2}\left(q-\overline{q}\right)$,
	\item[g)] ${\rm Sc}\left(\overline{q}\right) = {\rm Sc}(q)$.
\end{enumerate}

A straightforward computation shows that
\begin{equation} \label{Quaternionic_Conjugation_Equality}
q  \overline{q} =\overline{q}q=  |\alpha|^2+u^2|\beta|^2+2u\left\langle\alpha,\beta\right\rangle \in\mathbb{U} .
\end{equation}

The involution (\ref{Bi-Quaternionic_Conjugation}) satisfies the following properties:
\begin{enumerate}
	\item[a)] $\overline{(p+q)^{\ast}} = \overline{p^{\ast}} + \overline{q^{\ast}}$,
	\item[b)] $\overline{(pq)^{\ast}} = \overline{q^{\ast}}\, \overline{p^{\ast}}$,
	\item[c)] $\overline{(\overline{q^{\ast}})^{\ast}} = q$.
\end{enumerate}

Observe that
	\begin{equation*}
		\overline{q^{\ast}}q = |\alpha|^2 -u^2 |\beta|^2 + 2u {\rm Vec}\left( \overline{\alpha} \beta \right)\in\mathbb{H}(\mathbb{U}),
	\end{equation*}
	and
	\begin{equation*}
		q \overline{q^{\ast}}= |\alpha|^2 -u^2 |\beta|^2 - 2u {\rm Vec}\left( \alpha \overline{\beta} \right)\in\mathbb{H}(\mathbb{U}).
	\end{equation*}

\subsection{Bilinear form}

Consider the following symmetric bilinear form for two elements $p,q\in\mathbb{H}(\mathbb{U})$:
\begin{equation}
\left\langle p,q\right\rangle_{\mathbb{H}}:=\mathrm{Sc}(p\overline{q}),
\end{equation}
that satisfies the following properties:
\begin{enumerate}
	\item[a)] $\left\langle p,q\right\rangle_{\mathbb{H}}=\mathrm{Sc}(p\overline{q})=\mathrm{Sc}\left(\overline{p\overline{q}}\right)=\mathrm{Sc}(q\overline{p})=\left\langle q,p\right\rangle_{\mathbb{H}}$,
	\item[b)] $\left\langle p,q+r\right\rangle_{\mathbb{H}}=\mathrm{Sc}\left(p\overline{(q+r)}\right)=\mathrm{Sc}\left(p\overline{q}+p\overline{r}\right)=\mathrm{Sc}\left(p\overline{q}\right)+\mathrm{Sc}\left(p\overline{r}\right)=\left\langle p,q\right\rangle_{\mathbb{H}}+\left\langle p,r\right\rangle_{\mathbb{H}}$,
	\item[c)] $\lambda\left\langle p,q\right\rangle_{\mathbb{H}}=\lambda\mathrm{Sc}\left(p\overline{q}\right)=\mathrm{Sc}\left((\lambda p)\overline{q}\right)=\left\langle \lambda p,q\right\rangle_{\mathbb{H}}=\mathrm{Sc}\left( p(\overline{\lambda q})\right)=\left\langle  p,\lambda q\right\rangle_{\mathbb{H}}$, $\lambda$ in $\mathbb{U}$.
	\item[d)] $\left\langle q,q\right\rangle_{\mathbb{H}}=\mathrm{Sc}(q\overline{q})=|\alpha|^2+u^2|\beta|^2+2u\left\langle\alpha,\beta\right\rangle\in\mathbb{U}$.
\end{enumerate}

The previous symmetric bilinear form defines the following quadratic form:
\begin{equation}
Q(q):=\left\langle q,q\right\rangle_{\mathbb{H}}=\mathrm{Sc}(q\overline{q})=q\overline{q}.
\end{equation}

Observe that $Q(q)=Q(\overline{q})$ and
\[Q(pq)=pq\overline{pq}=pq\overline{q}\overline{p}=q\overline{q}p\overline{p}=Q(p)Q(q),\]
using the fact that $q\overline{q}\in\mathbb{U}$ and commutes with all extensions of quaternions.

Define the angle between $p,q\in \HD$ as follows:
\begin{equation}
\cos(z):=\frac{\left\langle p,q\right\rangle_{\mathbb{H}}}{\sqrt{Q(p)}\sqrt{Q(q)}}.
\end{equation}

\subsection{Polar form}

For $z\in \mathbb{U}$ we can write down the power series for the trigonometric functions:

\begin{align*}
\sin z =& \sum^{\infty}_{n=0} \frac{(-1)^n}{(2n+1)!} z^{2n+1},\\
\cos z =& \sum^{\infty}_{n=0} \frac{(-1)^n}{(2n)!} z^{2n}.
\end{align*}

Observe that this definition implies that

\begin{align*}
\cos(z)&=\cos(a+ub)=\cos(a)\cos(ub)-\sin(a)\sin(ub),\\
\sin(z)&=\sin(a+ub)=\sin(a)\cos(ub)+\cos(a)\sin(ub).
\end{align*}

\begin{remark}
	Since $u^2\in \mathbb R$ then it follows that $\sin(ub)\in u\mathbb R$ and $\cos(ub)\in{\mathbb R}$
\end{remark}

\bigskip

Any $q\in\mathbb{H}(\mathbb{U})$ such that $Q(q)\neq0$ can be written in the following form:
\begin{equation}\label{polar}
q=\sqrt{Q(q)}\left(\cos(z)+\hat{q}\sin(z)\right),\quad Q(\hat{q})=1,\quad \hat{q}^2=-1,
\end{equation}
where $\cos(z)=\frac{q_0}{\sqrt{Q(q)}}$, $\sin(z)=\frac{\sqrt{Q(\vec{q})}}{\sqrt{Q(q)}}$ and $\hat{q}=\frac{\vec{q}}{\sqrt{Q(\vec{q})}}$.

Observe that for $q\in\mathbb{H}(\mathbb{C})$, $Q(q)=0$ if and only if $\left|\alpha\right|=\left|\beta\right|$ and $\left\langle \alpha,\beta\right\rangle=0$. For $q\in\mathbb{H}(\mathbb{D})$, $Q(q)=0$ if and only if $\left|\alpha\right|=0$, i.e. $q=\epsilon\beta$. For $q\in\mathbb{H}(\mathbb{S})$, $Q(q)=0$ if and only if $\left|\alpha\right|+\left|\beta\right|=0$, i.e. $q\equiv0$.

\section{The action of unit--norm elements on $\mathbb R^4$}\label{sec:action}

We now present an important result that will be very useful later.

\begin{theorem}\label{decomposition} Let $q\in \HU$ such that $Q(q)=1$, then there exist $q_r$, and $q_b$ such that $q=q_rq_b$ with $Q(q_r)=Q(q_b)=1$, $q_r^*=q_r$ and $\overline{q_b}=q^*_b$.
\end{theorem}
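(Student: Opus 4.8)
The plan is to first read off what the two involution constraints mean structurally, and then to build $q_r$ and $q_b$ by a polar-type decomposition anchored on the ``real-quaternion part'' of $q$. Writing a generic element of $\mathbb{H}(\mathbb{U})$ as $\alpha+u\beta$ with $\alpha,\beta\in\mathbb{H}$, the equation $q_r^{\ast}=q_r$ forces the $u$-component to vanish, so (recalling that $u\gamma=0$ with $\gamma\in\mathbb{H}$ real implies $\gamma=0$) it holds exactly when $q_r\in\mathbb{H}$ is an ordinary real quaternion, in which case $Q(q_r)=1$ just says $|q_r|=1$. Comparing instead the $1$- and $u$-components of $\overline{q_b}=q_b^{\ast}$, via $\overline{\alpha_b}=\alpha_b$ and $\overline{\beta_b}=-\beta_b$, the condition $\overline{q_b}=q_b^{\ast}$ holds exactly when $q_b=s+u\vec v$ with $s\in\R$ a scalar and $\vec v$ a pure vector quaternion. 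Thus the theorem amounts to factoring $q$ as (unit real quaternion)$\,\times\,$(scalar plus $u$ times a real vector).

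Next I would exploit the hypothesis. By \eqref{Quaternionic_Conjugation_Equality}, $Q(q)=|\alpha|^2+u^2|\beta|^2+2u\langle\alpha,\beta\rangle$, and since $\{1,u\}$ is a basis of $\mathbb{U}$ over $\R$, the equality $Q(q)=1$ splits into the two real relations $|\alpha|^2+u^2|\beta|^2=1$ and $\langle\alpha,\beta\rangle=0$. The orthogonality relation is the crucial one: it is precisely what will make the vector part of the candidate $q_b$ pure.

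In the generic case $\alpha\neq0$ I would set $q_r:=\alpha/|\alpha|$ and $q_b:=|\alpha|+u\,\overline{q_r}\,\beta$. Then $q_rq_b=|\alpha|\,q_r+u\,q_r\overline{q_r}\,\beta=\alpha+u\beta=q$, using $|\alpha|\,q_r=\alpha$ and $q_r\overline{q_r}=1$. The scalar part of $q_b$ is the real number $|\alpha|$, and its vector part $\overline{q_r}\,\beta$ is pure because $\mathrm{Sc}(\overline{q_r}\,\beta)=\langle q_r,\beta\rangle=\langle\alpha,\beta\rangle/|\alpha|=0$, so $\overline{q_b}=q_b^{\ast}$. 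Finally $Q(q_r)=|q_r|^2=1$, and since $|\overline{q_r}\,\beta|=|\beta|$ and the scalar/vector cross term vanishes, $Q(q_b)=|\alpha|^2+u^2|\beta|^2=1$. This settles the main case for all three algebras.

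The main obstacle is the degenerate case $\alpha=0$, where $q_r$ is no longer pinned down by $\alpha$. Here $|\alpha|^2+u^2|\beta|^2=1$ reads $u^2|\beta|^2=1$, which is impossible for $\mathbb{H}(\mathbb{C})$ ($u^2=-1$) and for $\mathbb{H}(\mathbb{D})$ ($u^2=0$) — so in those algebras $\alpha\neq0$ is automatic — while for $\mathbb{H}(\mathbb{S})$ ($u^2=1$) it forces $|\beta|=1$. In this remaining subcase I would instead choose \emph{any} unit real quaternion $q_r$ orthogonal to $\beta$ in $\R^4$ (the orthogonal complement of the nonzero $\beta$ meets the unit sphere $S^3$) and set $q_b:=u\,\overline{q_r}\,\beta$; then $q_rq_b=u\,q_r\overline{q_r}\,\beta=u\beta=q$, the vector part is pure by the choice $\langle q_r,\beta\rangle=0$, and $Q(q_b)=u^2|\beta|^2=1$. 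This completes the construction, and the freedom in choosing $q_r$ here is consistent with the fact that only existence, not uniqueness, is asserted.
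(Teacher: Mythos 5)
Your proof is correct, and the factorization you construct is in fact the same one the paper builds, just in different clothing. Writing $q=\alpha+u\beta$, the paper's proof puts $|\alpha|=\cos(u\theta)$, $u|\beta|=\sin(u\theta)$, takes $q_r=\alpha/|\alpha|$ and $q_b=\cos(u\theta)+\sin(u\theta)\,\overline{q_r}\,\tilde\beta$ with $\sin(u\theta)\tilde\beta=u\beta$; unwinding this gives exactly your $q_b=|\alpha|+u\,\overline{q_r}\,\beta$. The difference is that the paper routes the verification through the polar form and the trigonometric functions on $\mathbb{U}$, whereas you verify $q_rq_b=q$, the purity of $\overline{q_r}\beta$, and $Q(q_b)=1$ by direct computation with the components; both arguments hinge on the same key fact, namely that $\langle\alpha,\beta\rangle=0$ forces $\overline{q_r}\beta$ to be a pure vector. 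Your writeup has two genuine advantages. First, you settle the degenerate case $\alpha=0$, which you correctly observe can occur only for split biquaternions ($u^2=1$, $|\beta|=1$); the paper's proof silently normalizes $\alpha/|\alpha|$ (equivalently, assumes $\cos(u\theta)\neq0$), so that case is not actually covered there, and your choice of an arbitrary unit $q_r\in\H$ orthogonal to $\beta$ together with $q_b=u\,\overline{q_r}\beta$ fills that hole. Second, your opening characterization of the two constraint sets ($q_r^*=q_r$ if and only if $q_r\in\H$, and $\overline{q_b}=q_b^*$ if and only if $q_b$ is a real scalar plus $u$ times a pure vector, i.e.\ an element of the paper's copy of $\R^4$) makes explicit what the paper only uses implicitly, and it is precisely what the later sections (rotation times boost, rotation times shear, etc.) rely on.
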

\begin{proof} Let $q=\alpha'+u\beta'$, with $\alpha',\beta'\in \H$ then since $Q(q)=1$ this means that  and $\left\langle \alpha',\beta' \right\rangle=0$ and $|\alpha'|^2+u^2|\beta'|^2=1$. The last equation implies that we can write $|\alpha'|=\cos(z)$ and  $u|\beta'|=\sin(z)$ for some $z\in\mathbb {U}$, moreover, since $\alpha'\in \H$ then $z=u\theta$ for some $\theta \in \mathbb{R}$.

Hence, we can write $q=\cos(u\theta)\alpha+ \sin(u\theta)\beta$ with $\theta \in \mathbb{R}$, $\alpha,\beta\in \H$, $Q(\alpha)=Q(\beta)=1$  and  $\left\langle \alpha,\beta \right\rangle=0$. Now let $q_r=\alpha$ and and $q_b=\cos(u\theta)+ \sin(u\theta)\overline{\alpha}\beta$, then since  $\left\langle \alpha,\beta \right\rangle=0$, it follows that $\overline \alpha\beta$ is a pure real vector. This  proves the claim, since $Q(\overline \alpha\beta)=\overline \alpha\beta\overline{\overline \alpha\beta}=\overline \alpha\beta\overline \beta\alpha=1$.
\end{proof}

\begin{remark} In the decomposition $q=q_rq_b$ above, the elements such that  $q_r^*=q_r$ form a group. In general, this is not the case for elements such that $\overline{q_b}=q^*_b$. For example, for the case $u=i$, as we will see in section \ref{sec:spaceforms}, $q_b$ represents a boost and the composition of two boosts involves a spatial rotation and a boost in the decomposition, the rotation is known as ``Thomas rotation" and the effect is called Thomas precession.

\end{remark}

We identify the subset of $\HU$ given by $\{v=v_0+u\vec{v} \mbox{ }|\mbox{ } v_0\in \mathbb{R}, \mbox{ }\vec{v}\in Vec(\H)\}$ with $\mathbb{R}^4$ in the most natural way (these elements are called minquats as a short name for minkowskian quaternions in \cite{Synge} for the case $u=i$).  Observe that this space is the same as $\{v\in \HU|v^*=\overline{v}\}$, moreover from Formula (\ref{polar}), if $Q(v)\neq 0$, we can write $v=\|v\|(\cos(u\phi)+\hat{v}\sin(u\phi))$, where $\|v\|$ is the standard norm of $\R^4$ and $\hat{v}^2=-1$. 

 Let $q\in\HU$ such that $Q(q)=1$ and consider 
 \[T_q(v):=qv\overline{q}^*.\]
 Obvserve that 
 \[T_{pq}(v)=pqv\overline{pq}^*=pqv\overline{q}^*\overline{p}^*=T_p(T_q(v))\]

so that 
\begin{equation}\label{morphism}
T_{pq}=T_p\circ T_q 
\end{equation}
hence the application $q\mapsto T_q$ is a group morphism.

 We will prove now the next

\begin{proposition} Let $q\in \HU$ such that $Q(q)=1$ and let $v\in\mathbb{R}^4$ then $T_q(v)\in \mathbb {R}^4$.  
\end{proposition}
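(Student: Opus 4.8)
The plan is to exploit the intrinsic description of the copy of $\mathbb{R}^4$ recorded just before the statement, namely $\mathbb{R}^4=\{v\in\HU \mid v^*=\overline{v}\}$. Membership of $T_q(v)$ in $\mathbb{R}^4$ is then equivalent to the single algebraic identity $\left(T_q(v)\right)^*=\overline{T_q(v)}$, so the whole proof reduces to computing the two involutions of $w:=qv\overline{q}^*$ and comparing them under the hypothesis $v^*=\overline{v}$.

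First I would compute $w^*$. Using property (b) of the involution $(\,\cdot\,)^*$, namely $(pq)^*=p^*q^*$, together with its involutivity (property (c)), one gets
\[
w^*=\left(qv\overline{q}^*\right)^*=q^*\,v^*\,\bigl(\overline{q}^*\bigr)^*=q^*\,v^*\,\overline{q},
\]
since $\bigl((\overline{q})^*\bigr)^*=\overline{q}$. Next I would compute $\overline{w}$. Here the relevant facts are property (b) of the quaternionic conjugation, $\overline{pq}=\overline{q}\,\overline{p}$, the involutivity $\overline{\overline{q}}=q$, and the commutation of the two conjugations noted in the text, $\overline{r^*}=(\overline{r})^*$. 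Applying the latter with $r=\overline{q}$ yields $\overline{(\overline{q})^*}=(\overline{\overline{q}})^*=q^*$, so that
\[
\overline{w}=\overline{qv\overline{q}^*}=\overline{\overline{q}^*}\,\overline{v}\,\overline{q}=q^*\,\overline{v}\,\overline{q}.
\]

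Comparing the two expressions, $w^*=q^*v^*\overline{q}$ and $\overline{w}=q^*\overline{v}\,\overline{q}$, the hypothesis $v\in\mathbb{R}^4$, i.e. $v^*=\overline{v}$, gives $w^*=\overline{w}$ at once, which by the characterization above is exactly $T_q(v)\in\mathbb{R}^4$. There is no real obstacle beyond careful bookkeeping with the two involutions; the only point demanding attention is the commutation $\overline{q^*}=(\overline{q})^*$ used to simplify $\overline{\overline{q}^*}$ and the double application of involutivity. I would also remark that the condition $Q(q)=1$ plays no role in this membership statement—it enters only later, in ensuring that $T_q$ acts by isometries of the bilinear form.
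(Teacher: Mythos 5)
Your proof is correct, but it takes a genuinely different route from the paper's. The paper proves this proposition by invoking its decomposition theorem (Theorem \ref{decomposition}): writing $q=q_rq_b$ with $q_r\in\H$ and $q_b\in\R^4$ and using the morphism property $T_{pq}=T_p\circ T_q$, it reduces to the two cases $q\in\H$ and $q\in\R^4$, each settled by explicit computation --- the first via the classical fact that conjugation by a unit real quaternion preserves $\mathrm{Vec}(\H)$, the second via the polar form $q=\cos(u\theta)+\hat{q}\sin(u\theta)$, trigonometric identities, and a parity count of sine factors (Remark 1). You instead verify the membership criterion directly: since $\R^4=\{v\in\HU \mid v^*=\overline{v}\}$ (asserted in the paper just before the statement), it suffices to compare $\left(qv\overline{q}^*\right)^*=q^*v^*\overline{q}$ with $\overline{qv\overline{q}^*}=q^*\overline{v}\,\overline{q}$, and these coincide when $v^*=\overline{v}$. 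Every identity you use --- $(pq)^*=p^*q^*$, $\overline{pq}=\overline{q}\,\overline{p}$, $\overline{q^*}=(\overline{q})^*$, and involutivity --- appears in the paper's preliminaries, so your argument is complete, shorter, free of case analysis, independent of the decomposition theorem, and (as you correctly observe) shows that the hypothesis $Q(q)=1$ is not needed for this closure statement. What your route does not deliver is the explicit expression of Equation (\ref{important}), which the paper's computation yields as a by-product and then reuses to write down the matrix $[T_q]_\gamma$ and, in Section \ref{sec:spaceforms}, the concrete descriptions of boosts, shears, and simple rotations; so the paper's heavier computation earns its keep later, but as a proof of the proposition itself yours is cleaner and more general.
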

\begin{proof}
 Since $Q(q)=1$, by Theorem \ref{decomposition} we can write $q=q_rq_b$ where  $q_r\in \H$, $q_b\in \R^4$ and $Q(q_r)=Q(q_b)=1$. Then, by equation (\ref{morphism}), $T_q=T_{q_r}\circ T_{q_b}$ so it is enough  to consider the cases $q\in \H$ and $q\in\R^4$.

If $q\in \H$, then  we have
\[T_q(v)=qv\overline{q}^*=qv\overline{q}=q(v_0+u\vec{v})\overline{q}=qv_0\overline{q}+qu\vec{v}\overline{q}=v_0q\overline{q}+uq\vec{v}\overline{q}=v_0+uq\vec{v}\overline{q},\] 
where we are using the fact that $v_0$ is real and $u$ is in the center of $\HU$. The fact that $q\in \H$ implies $q\vec{v}\overline{q}\in Vec(\H)$  since this is a rotation. This proofs the claim for this case.

If $q\in \R^4$ then
\begin{align*}
T_q(v)&=qv\overline{q}^*=qvq=\left(q_0+u\vec{q}\right)\left(v_0+u\vec{v}\right)\left(q_0+u\vec{q}\right)\\
&=\left(q_0+u\vec{q}\right)\left(v_0q_0+uv_0\vec{q}+uq_0\vec{v}-u^2\langle\vec{v},\vec{q}\rangle+u^2[\vec{v},\vec{q}]\right)\\
&=v_0\left(q_0^2+u^2\vec{q}^2\right)+u\vec{v}\left(q_0^2-u^2\vec{q}^2\right)-2u^2\langle\vec{v},\vec{q}\rangle\left(q_0+u\vec{q}\right)+2uv_0q_0\vec{q}\\
&=v_0\left(q_0^2+u^2\vec{q}^2\right)+u\vec{v}-2u^2\langle\vec{v},\vec{q}\rangle\left(q_0+u\vec{q}\right)+2uv_0q_0\vec{q}.
\end{align*}

Since $Q(q)=1$, we can write $q=\cos(u\theta)+\hat{q}\sin(u\theta)$ where $\hat{q}\in Vec(\H)$ and $\hat{q}^2=-1$, hence
\begin{align}
T_q(v)&=v_0\left(\cos^2(u\theta)-\sin^2(u\theta)\right)+u\vec{x}-2u\langle\vec{v},\hat{q}\rangle\cos(u\theta)\sin(u\theta)-\notag\\
&\quad-2u\langle\vec{v},\hat{q}\rangle\sin^2(u\theta)\hat{q}+2v_0\cos(u\theta)\sin(u\theta)\hat{q}\notag\\
&=v_0\cos(2u\theta)+u\vec{v}-u\langle\vec{v},\hat{q}\rangle\sin(2u\theta)-2u\langle\vec{v},\hat{q}\rangle\sin^2(u\theta)\hat{q}+v_0\sin(2u\theta)\hat{q}\label{important}
\end{align}

The claim follows from {\bf Remark 1} since in the real part sine functions are always multiplied an even number of times and in the vector part they are always multiplied an odd number of times.
\quad
 
\end{proof}

By completing the orthonormal set $\{1,\hat q\}$ to an orthonormal base $\gamma=\{1,\hat q,\hat{q}_2,\hat{q}_3\}$ of $\R^4$, and writing $v=v_0+uv_1\hat q+uv_2\hat{q}_2+uv_3\hat{q}_3$ from  Equation (\ref{important}) one obtains
\begin{align*}
T_q(v)&=v_0\cos(2u\theta)+u\left(\langle\vec{v},\hat{q}\rangle\hat{q}+\langle\vec{v},\hat{q}_2\rangle\hat{q}_2+\langle\vec{v},\hat{q}_3\rangle\hat{q}_3\right)-u\langle\vec{v},\hat{q}\rangle\sin(2u\theta)\\
&\quad-2u\langle\vec{v},\hat{q}\rangle\sin^2(u\theta)\hat{q}+v_0\sin(2u\theta)\hat{q}\\
&=v_0\cos(2u\theta)-uv_1\sin(2u\theta)+v_0\sin(2u\theta)\hat{q}+uv_1\cos(2u\theta)\hat{q}\\
&\quad+uv_2\hat{q}_2+uv_3\hat{q}_3.
\end{align*}

Therefore, we may write
\[[T_q]_\gamma=\left(\begin{array}{cccc}
\cos(2u\theta)&-\sin(2u\theta)&0&0\\
\sin(2u\theta)&\cos(2u\theta)&0&0\\
0&0&1&0\\
0&0&0&1\\
\end{array}\right)\]

\begin{proposition}\label{PropBilinearForm} The transformation $T_q$ preserves the bilinear form, i.e., if $v'=T_q(v)$ and $w'=T_q(w)$, then
	\[\left\langle v',w'\right\rangle_{\mathbb{H}}=\left\langle v,w\right\rangle_{\mathbb{H}}.\]  
\end{proposition}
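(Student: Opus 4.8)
The plan is to compute $\langle T_q(v),T_q(w)\rangle_{\mathbb{H}}$ directly from its definition $\mathrm{Sc}(T_q(v)\overline{T_q(w)})$ and to reduce it to $\mathrm{Sc}(v\overline{w})=\langle v,w\rangle_{\mathbb{H}}$ by repeatedly invoking $Q(q)=\overline{q}q=q\overline{q}=1$ together with the algebraic properties of the three involutions. The first step is to rewrite $\overline{T_q(w)}$. Since $T_q(w)=qw\overline{q}^{\ast}$ and quaternionic conjugation reverses products, $\overline{T_q(w)}=\overline{\overline{q}^{\ast}}\,\overline{w}\,\overline{q}$; and because $\overline{q}^{\ast}=\overline{q^{\ast}}$ together with $\overline{\overline{X}}=X$, the outer factor collapses to $\overline{\overline{q}^{\ast}}=q^{\ast}$, giving $\overline{T_q(w)}=q^{\ast}\overline{w}\,\overline{q}$.

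Substituting, the quantity to evaluate becomes $\mathrm{Sc}\big(qv\overline{q}^{\ast}\,q^{\ast}\overline{w}\,\overline{q}\big)$. The next step is to collapse the central factor $\overline{q}^{\ast}q^{\ast}$. Using the multiplicativity $(pq)^{\ast}=p^{\ast}q^{\ast}$ we obtain $\overline{q}^{\ast}q^{\ast}=(\overline{q}q)^{\ast}=Q(q)^{\ast}=1$, where I use that $\overline{q}q=Q(q)\in\mathbb{U}$ by Equation (\ref{Quaternionic_Conjugation_Equality}) and that $Q(q)=1$ is a real scalar fixed by every involution. Hence the expression reduces to $\mathrm{Sc}(qv\overline{w}\,\overline{q})$.

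Finally I would invoke the cyclic invariance of the scalar part, $\mathrm{Sc}(ab)=\mathrm{Sc}(ba)$, which follows at once from the product formula $\mathrm{Sc}(pq)=p_0q_0-\langle\vec{p},\vec{q}\rangle$, symmetric under the interchange of $p$ and $q$ because the coefficients lie in the commutative ring $\mathbb{U}$. Taking $a=qv\overline{w}$ and $b=\overline{q}$ yields $\mathrm{Sc}(qv\overline{w}\,\overline{q})=\mathrm{Sc}(\overline{q}\,qv\overline{w})=\mathrm{Sc}(Q(q)\,v\overline{w})=\mathrm{Sc}(v\overline{w})=\langle v,w\rangle_{\mathbb{H}}$, which finishes the argument.

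I expect the main obstacle to be bookkeeping rather than anything conceptual: one must keep the three involutions $\ast$, $\overline{(\cdot)}$ and $\overline{(\cdot)}^{\ast}$ straight and apply each property — the anti-multiplicativity of $\overline{(\cdot)}$, the multiplicativity of $\ast$, and the cyclicity of $\mathrm{Sc}$ — at exactly the right place. As an alternative route one could instead appeal to Theorem \ref{decomposition} to write $q=q_rq_b$ and to the morphism property (\ref{morphism}), reducing the claim to the two cases $q\in\H$ and $q\in\R^4$ treated in the preceding proposition; but the direct computation above handles all three extensions uniformly and avoids any case analysis.
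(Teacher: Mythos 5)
Your proof is correct and follows essentially the same route as the paper's: both reverse the quaternionic conjugation to get $\overline{T_q(w)}=q^*\overline{w}\,\overline{q}$, collapse the middle factor via $\overline{q}^*q^*=(\overline{q}q)^*=Q(q)^*=1$, and then reduce the remaining sandwich $qv\overline{w}\,\overline{q}$ using $Q(q)=1$. The only (cosmetic) difference is the final step: you invoke the symmetry $\mathrm{Sc}(ab)=\mathrm{Sc}(ba)$ to bring $\overline{q}$ around to meet $q$, whereas the paper first symmetrizes, writing $\mathrm{Sc}\left(v'\overline{w'}\right)=\frac{1}{2}\left(v'\overline{w'}+w'\overline{v'}\right)$, so that the sandwiched middle becomes the central scalar $\mathrm{Sc}\left(v\overline{w}\right)\in\mathbb{U}$, which can be pulled past $\overline{q}$ to give $q\overline{q}=1$; both are one-line observations of equal strength.
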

\begin{proof}
As $v'=T_q(v)$ and $w'=T_q(w)$, then
\begin{align*}
\left\langle v',w'\right\rangle_{\mathbb{H}}&=\mathrm{Sc}\left(v'\overline{w'}\right)=\frac{1}{2}\left(v'\overline{w'}+w'\overline{v'}\right)\\
&=\frac{1}{2}\left(qv\overline{q}^*\overline{qw\overline{q}^*}+qw\overline{q}^*\overline{qv\overline{q}^*}\right)\\
&=\frac{1}{2}\left(qv\overline{q}^*q^*\overline{w}\,\overline{q}+qw\overline{q}^*q^*\overline{v}\,\overline{q}\right)\\
&=q\left[\frac{1}{2}\left(v\overline{w}+w\overline{v}\right)\right]\overline{q}\\
&=q\mathrm{Sc}\left(v\overline{w}\right)\overline{q}=q\overline{q}\mathrm{Sc}\left(v\overline{w}\right)=\left\langle v,w\right\rangle_{\mathbb{H}}.
\end{align*}
\end{proof}

If $v\in \mathbb{R}^4$ then, since $Q(v)$ commutes with every element of $\HU$, we have that \[Q(T_q(v))=qv\overline q^*\overline{qv\overline q^*}=qv\overline q^*q^*\overline {v}\hspace{.1cm}\overline q=qv\overline {v}\hspace{.1cm}\overline q=qQ(v)\overline{q}=Q(v)q\overline{q}=Q(v).\]
Hence, this transformation preserves the quadratic form. This implies that the space $M:=\{v\in \mathbb R^4\mbox{ }|\mbox{ }Q(v)=1\}$ is invariant by $T_q$,  so we can consider $T_q|_M$.

\section{Distance on M}\label{sec:distance}

In this section we define a distance $d$ on $M$, in general $M$ will be a space form with respect to this distance, this will be clear in section \ref{sec:spaceforms}. We will also prove that $T_q$ is an isometry of the pair $(M,d)$. 

Let $v,w\in M$, then  we can write $v=\cos(u\rho_1)+\hat{r}_1\sin(u\rho_1)$ and $w=\cos(u\rho_2)+\hat{r}_2\sin(u\rho_2)$. We define $d(v,w)$ implicitly in the following way. Since $Q(v)=Q(w)=1$ then $Q(v\overline{w})=1$, so we can write $v\overline{w}=\cos(\psi)+\hat{r}_3\sin(\psi)$ with $Q(\hat{r}_3)=1$, $\hat{r}_3^2=-1$ and $\psi\in\mathbb U$. Moreover, since

\begin{align*}
v\overline{w}&=\left(\cos(u\rho_1)+\hat{r}_1\sin(u\rho_1)\right)\left(\cos(u\rho_2)-\hat{r}_2\sin(u\rho_2)\right)\\
&=\cos(u\rho_1)\cos(u\rho_2)+\langle\hat r_1,\hat r_2 \rangle\sin(u\rho_1)\sin(u\rho_2)\\
&+ \hat{r}_1\sin(u\rho_1)\cos(u\rho_2)-\hat{r}_2\cos(u\rho_1)\sin(u\rho_2)-[\hat r_1,\hat r_2]\sin(u\rho_1)\sin(u\rho_2). 
\end{align*}
we have the following equalities
\begin{align}
\cos(\psi)&=\cos(u\rho_1)\cos(u\rho_2)+\langle\hat r_1,\hat r_2\rangle\sin(u\rho_1)\sin(u\rho_2)\label{cosines}\\
\hat{r}_3\sin(\psi)&=\hat{r}_1\sin(u\rho_1)\cos(u\rho_2)-\hat{r}_2\cos(u\rho_1)\sin(u\rho_2)-\notag\\
&\quad-[\hat r_1,\hat r_2]\sin(u\rho_1)\sin(u\rho_2).\label{sines}
\end{align}

One can prove that $\psi=u\gamma$ for some  $\gamma\in\R$, so that we define $d(v,w)=|\gamma|$.

\begin{proposition} $d:M\times M\rightarrow \R$ is a distance function.\end{proposition}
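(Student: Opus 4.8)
The plan is to verify the three defining axioms of a distance function for $d(v,w)=|\gamma|$, where $\gamma\in\R$ is determined by $\psi=u\gamma$ and the polar decomposition $v\overline{w}=\cos(\psi)+\hat{r}_3\sin(\psi)$. First I would establish the claim asserted just before the statement, namely that $\psi=u\gamma$ for some $\gamma\in\R$: since $v,w\in M\subset\R^4$ satisfy $v^*=\overline{v}$ and $w^*=\overline{w}$, one checks that $v\overline{w}$ again lies in the subspace $\{q\in\HU\mid q^*=\overline{q}\}$ identified with $\R^4$, and because $Q(v\overline{w})=Q(v)Q(\overline{w})=1$ the polar form forces the scalar part into $\R$ and the angle into $u\R$, exactly as for elements of $M$ themselves. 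This makes $\gamma$ well defined (up to the sign, which $|\gamma|$ removes), so $d$ takes values in $\R_{\ge 0}$.

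Next I would dispatch the easy axioms. Nonnegativity is immediate from the absolute value. For the identity of indiscernibles, $d(v,w)=0$ means $\gamma=0$, hence $\psi=0$ and $v\overline{w}=\cos(0)+\hat r_3\sin(0)=1$; multiplying on the right by $w$ and using $w\overline{w}=Q(w)=1$ gives $v=w$, and the converse is clear since $v\overline{v}=1$. For symmetry, I would compare $v\overline{w}$ with $w\overline{v}$: since $\overline{v\overline{w}}=w\overline{v}$ (using $\overline{\overline{w}}=w$ and the anti-involution property $\overline{pq}=\overline{q}\,\overline{p}$), and $Q(q)=Q(\overline{q})$ with $\mathrm{Sc}(q)=\mathrm{Sc}(\overline{q})$, the two products share the same scalar part $\cos(\psi)$ and hence the same $|\gamma|$, giving $d(v,w)=d(w,v)$.

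The main obstacle will be the triangle inequality $d(v,w)\le d(v,z)+d(z,w)$. Here I would exploit the group action and Proposition \ref{PropBilinearForm}: the key reduction is that $d$ is invariant under $T_q$, because $T_q$ preserves the bilinear form and hence preserves $\mathrm{Sc}(v\overline{w})=\cos(\psi)$, which determines $\gamma$ up to sign. Using this invariance together with the transitivity of the action on $M$ (to be read off from the matrix form $[T_q]_\gamma$ computed above), I would normalize one of the points — say move $z$ to the base point $1$ — so that the inequality reduces to a statement purely about the three real parameters, equivalently an additivity/subadditivity relation among the angles $\gamma$ obtained by composing two elements of $M$. The honest difficulty is that the geometry of $M$ differs across the three cases $u=i,\varepsilon,j$ (spherical, Galilean/parabolic, hyperbolic), so the angle-addition bound coming from Equations (\ref{cosines})--(\ref{sines}) must be argued uniformly; I expect to derive it from the cosine relation $\cos(u\gamma)=\cos(u\rho_1)\cos(u\rho_2)+\langle\hat r_1,\hat r_2\rangle\sin(u\rho_1)\sin(u\rho_2)$ together with $|\langle\hat r_1,\hat r_2\rangle|\le 1$, reducing to the scalar trigonometric inequality $\cos(a+b)\le\cos(u\gamma)$ that yields $|\gamma|\le a+b$ after extracting the real parameter via the Remark that $\cos(u\theta)\in\R$ and $\sin(u\theta)\in u\R$.
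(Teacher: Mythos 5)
Your verification of non-negativity, identity of indiscernibles and symmetry coincides with the paper's proof, and your reduction of the triangle inequality to the algebraic identity $v\overline{w}=(v\overline{z})(z\overline{w})$ is the same step the paper takes (the paper simply inserts $\overline{z}z=1$ rather than normalizing $z$ to the base point $1$; your normalization additionally requires a transitivity argument, e.g. $T_{q_b}(1)=q_b^2$, which you only gesture at). However, one supporting claim is false and the final step has a genuine gap. The false claim: the subspace $\R^4=\{q\in\HU \mid q^*=\overline{q}\}$ is \emph{not} closed under multiplication. By the expansion leading to Equations (\ref{cosines})--(\ref{sines}), the product $v\overline{w}$ contains the term $-[\hat{r}_1,\hat{r}_2]\sin(u\rho_1)\sin(u\rho_2)$, whose coefficient lies in $u^2\R\subset\R$; so for $u=i$ or $u=j$, with $\hat{r}_1,\hat{r}_2$ non-parallel and both angles nonzero, $v\overline{w}$ has a genuinely real vector component and leaves $\R^4$. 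What is true --- and what the paper actually uses --- is only that the \emph{scalar} part $\cos(\psi)$ is real (Remark 1 applied to (\ref{cosines})); upgrading that to $\psi\in u\R$ still needs a range check in each case, e.g. $\mathrm{Sc}(v\overline{w})\geq 1$ on a single sheet when $u=i$.

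The serious gap is the claimed uniform trigonometric finish. In the dual case $u=\varepsilon$ the cosine relation (\ref{cosinelaw}) degenerates to $1=1$, because $\cos(\varepsilon\gamma)\equiv 1$: the scalar part of $v\overline{w}$ carries no information about $\gamma$ whatsoever. This breaks both your triangle inequality (there is no ``scalar trigonometric inequality'' to extract) and your claim that invariance of $d$ under $T_q$ follows from Proposition \ref{PropBilinearForm}, since ``$\mathrm{Sc}(v\overline{w})$ determines $\gamma$ up to sign'' is false for $u=\varepsilon$; the paper's isometry theorem instead conjugates the full polar form, $T_q(v)\overline{T_q(w)}=\cos(u\rho)+q\hat{r}\overline{q}\sin(u\rho)$, which works even in the degenerate case. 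In the Galilean case the distance lives entirely in the vector (sine) part, and the paper proves the triangle inequality there by the direct computation $v\overline{w}=1+\varepsilon(\hat{r}_1\rho_1+\hat{r}_2\rho_2)$ together with the Euclidean norm inequality (Remark \ref{cosineremark2}). Even in the two nondegenerate cases your inequality is not uniform: for $u=j$ Cauchy--Schwarz gives $\cos(\gamma_3)\geq\cos(\gamma_1+\gamma_2)$ and one concludes because $\cos$ is decreasing on $[0,\pi]$, whereas for $u=i$ it gives $\cosh(\gamma_3)\leq\cosh(\gamma_1+\gamma_2)$ --- the reverse direction --- and one concludes because $\cosh$ is increasing; the two sign flips compensate only case by case. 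This is precisely why the paper, after deriving the generalized law of cosines (\ref{cosinelaw}), finishes by appealing to the three case-specific Remarks \ref{cosineremark1}, \ref{cosineremark2} and \ref{cosineremark3}. Your proof becomes correct once the last step is replaced by that three-case analysis.
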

\begin{proof} We prove the tree properties of a distance function:
\begin{enumerate}
\item Obviously $d(v,w)\geq0$. If $v=w$ then $v\overline{w}=v\overline{v}=1$ and $1=cos(u0)+\hat{p}sin(u0)$ so $d(v,w)=0$.
Conversely, if $d(v,w)=0$ then this implies $v\overline{w}=cos(u0)+\hat{p}sin(u0)=1$ so $\overline{w}=v^{-1}$ and since $v\overline{v}=1$ then $\overline{v}=\overline{w}$, therefore $v=w$.
\item Suppose  $w\overline{v}=\overline{v\overline{w}}$ so if  $v\overline{w}=\cos(u\gamma)+\hat{r}\sin(u\gamma)$ then $w\overline v=\cos(u\gamma)+\overline{\hat{r}}\sin(u\gamma)$, hence $d(v,w)=d(w,v)$.

\item We now prove the triangle inequality. Let $u,v,w\in M$, suppose that $u\overline{v}= \cos(u\gamma_1)+\hat{r}_1\sin(u\gamma_1)$ and 
 $v\overline{w}= \cos(u\gamma_2)+\hat{r}_2\sin(u\gamma_2)$, then $d(u,v)+d(v,w)=|\gamma_1|+|\gamma_2|$. Observe now that
$u\overline w=u\overline{v}v\overline{w}$ since $v\in M$. Hence

\begin{align*}
u\overline w&=(\cos(u\gamma_1)+\hat{r}_1\sin(u\gamma_1))(\cos(u\gamma_2)+\hat{r}_2\sin(u\gamma_2))\\
&=\cos(u\gamma_1)\cos(u\gamma_2)-\langle\hat r_1,\hat r_2 \rangle\sin(u\gamma_1)\sin(u\gamma_2)\\
&+ \hat{r}_1\sin(u\gamma_1)\cos(u\gamma_2)+\hat{r}_2\cos(u\gamma_1)\sin(u\gamma_2)+[\hat r_1,\hat r_2]\sin(u\gamma_1)\sin(u\gamma_2). 
\end{align*}

so 

\begin{equation}\label{cosinelaw}
\cos(u\gamma_3)=\cos(u\gamma_1)\cos(u\gamma_2)-\langle\hat r_1,\hat r_2 \rangle\sin(u\gamma_1)\sin(u\gamma_2)
\end{equation}

which is a generalized law of cosines, the claim follows from Remarks \ref{cosineremark1},\ref{cosineremark2} and \ref{cosineremark3}.

\end{enumerate}

\end{proof}

If $v,w\in M$, then $T_q(v)=qv\overline{q}^*$ and $T_q(w)=qw\overline{q}^*$, hence
\[T_q(v)\overline{T_q(w)}=qv\overline{q}^*\overline{qw\overline{q}^*}=qv\overline{q}^*q^*\overline w \hspace{.1cm}\overline{q}=qv\overline w \hspace{.1cm}\overline{q}.\]

Now observe that if $v\overline{w}=\cos(u\rho)+\hat{r}\sin(u\rho)$ then $T_q(v)\overline{T_q(w)}=qv\overline w\hspace{.1cm}\overline{q}=\cos(u\rho)+q\hat{r}\overline{q}\sin(u\rho)$. Since  $(q\hat{r}\overline{q})^2=-1$  this proves the following 

\begin{theorem} Let $v,w\in M$ and $q\in\HU$ such that Q(q)=1, then $d(T_q(v),T_q(w))=d(v,w)$, i.e. $T_q$ is an isometry of $(M,d)$. 

\end{theorem}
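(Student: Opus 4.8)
The plan is to show that the auxiliary product $T_q(v)\overline{T_q(w)}$ has exactly the same polar angle as $v\overline{w}$; since the distance $d$ was defined solely through that angle (the value $|\gamma|$ where $\psi = u\gamma$ is the argument of the polar form), isometry will follow immediately. This is precisely the strategy foreshadowed in the computation placed just before the statement, so the proof reduces to filling in three short verifications.

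First I would simplify $\overline{T_q(w)}$. Writing $T_q(w) = qw\overline{q}^*$ and applying the order-reversing rule $\overline{ABC} = \overline{C}\,\overline{B}\,\overline{A}$ together with the identities $\overline{p^*} = (\overline{p})^*$ and $\overline{\overline{q^*}} = q$ noted in the preliminaries, one obtains $\overline{T_q(w)} = q^*\overline{w}\,\overline{q}$. Then in $T_q(v)\overline{T_q(w)} = qv\overline{q}^*\,q^*\overline{w}\,\overline{q}$ I would collapse the two middle factors using property b) of the involution $\ast$: $\overline{q}^*q^* = (\overline{q}q)^* = Q(q)^* = 1^* = 1$, because $\overline{q}q = Q(q) = 1$ lies in $\mathbb{U}$. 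This yields $T_q(v)\overline{T_q(w)} = qv\overline{w}\,\overline{q}$, exactly as stated.

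Next I would insert the polar form. Since $Q(v)=Q(w)=1$ and $Q$ is multiplicative, $Q(v\overline{w})=1$, so $v\overline{w} = \cos(u\rho) + \hat{r}\sin(u\rho)$ with $\hat{r}^2=-1$, $Q(\hat r)=1$, and $d(v,w)=|\rho|$, $\rho\in\R$, by the analysis that shows the argument is of the form $u\gamma$. By Remark 1 the coefficients satisfy $\cos(u\rho)\in\R$ and $\sin(u\rho)\in u\R$, hence lie in the centre of $\HU$ (recall $u$ commutes with all $\mathbf{i}_k$). Conjugation by $q$ therefore passes through these scalars, and using $q\overline{q}=Q(q)=1$ gives
\[
T_q(v)\overline{T_q(w)} = \cos(u\rho) + (q\hat{r}\,\overline{q})\sin(u\rho).
\]

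Finally I would confirm that $q\hat{r}\,\overline{q}$ is again an admissible unit imaginary for the polar form: from $\overline{q}q=1$ and $\hat{r}^2=-1$ one gets $(q\hat{r}\,\overline{q})^2 = q\hat{r}(\overline{q}q)\hat{r}\,\overline{q} = q\hat{r}^2\overline{q} = -q\overline{q} = -1$, and $Q(q\hat{r}\,\overline{q}) = Q(q)Q(\hat r)Q(\overline q) = Q(\hat r)=1$. Thus the displayed expression already is the polar decomposition of $T_q(v)\overline{T_q(w)}$, with argument the \emph{same} $u\rho$, whence $d(T_q(v),T_q(w)) = |\rho| = d(v,w)$. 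The only genuinely delicate step is the conjugation bookkeeping producing the cancellation $\overline{q}^*q^* = 1$ and the centrality argument that fixes the scalar part; once these are in place — and once one notes that $d$ reads off only the scalar $\cos(u\rho)$, so any sign ambiguity in the polar form is absorbed by the absolute value — the conclusion is immediate.
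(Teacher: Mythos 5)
Your proof is correct and follows essentially the same route as the paper: both reduce to the computation $T_q(v)\overline{T_q(w)}=qv\overline{q}^*q^*\overline{w}\,\overline{q}=qv\overline{w}\,\overline{q}$ via the cancellation $\overline{q}^*q^*=(\overline{q}q)^*=1$, then push $q$ and $\overline{q}$ past the central scalars $\cos(u\rho)$ and $\sin(u\rho)$ to obtain the polar form $\cos(u\rho)+\left(q\hat{r}\,\overline{q}\right)\sin(u\rho)$ with the same angle, noting $(q\hat{r}\,\overline{q})^2=-1$. The only difference is that you spell out verifications the paper leaves implicit, such as $Q(q\hat{r}\,\overline{q})=1$ and the explicit check that $(q\hat{r}\,\overline{q})^2=-1$.
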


\section{Symmetries of simply connected space forms}\label{sec:spaceforms}

In this section we restrict our results to each of the three cases. In each case express the decomposition $q=q_rq_b$, using this decomposition we describe the action of $T_q$ in $\R^4$, then we also describe the subset $M$ and the distance $d$, explaining the action of $T_q$ as an isometry of
$(M,d)$. We recover the geometry of simply connected space forms and their group of symmetries.

\subsection{Biquaternions}

Note that for $q\in\mathbb{H}(\mathbb{C})$:
\begin{align*}
Q(q)&=q\overline{q}=\overline{q}q=\sum_{k=0}^3q_k^2=\left|\alpha\right|^2-\left|\beta\right|^2+i\left(\alpha\overline{\beta}+\beta\overline{\alpha}\right)\notag\\
&=\left|\alpha\right|^2-\left|\beta\right|^2+2i\left\langle \alpha,\beta\right\rangle\in\mathbb{C}.
\end{align*}

For $z\in\mathbb{C}$, it is well-known that
\begin{align*}
\cos(z)&=\cos(a+ib)=\cos(a)\cos(ib)-\sin(a)\sin(ib)\\
&=\cos(a)\cosh(b)-i\sin(a)\sinh(b),\\
\sin(z)&=\sin(a+ib)=\sin(a)\cos(ib)+\cos(a)\sin(ib)\\
&=\sin(a)\cosh(b)+i\cos(a)\sinh(b).
\end{align*}

In particular, we have that

\[\cos(ib)=\cosh(b)\]

\[\sin(ib)=i\sinh(b)\]

which relates these functions with the hyperbolic functions.

\subsubsection{Lorentz transformations of Minkowski space}

In this case observe that if $v\in\mathbb{R}^4$  then 
\[Q(v)=Q(v_0+i\Vec{v})=(v_0+i\Vec{v})(v_0-i\Vec{v})=(v_0)^2-||\Vec{v}||^2\]
hence {\bf$\mathbb R^4$ is Minkowski spacetime}.

We proved in Theorem \ref{decomposition} that any unit-norm biquaternion $q\in\mathbb{H}(\mathbb{C})$ can be decomposed into the form
\begin{equation*}
q=q_r q_b,
\end{equation*}
where $q_r=q_r^*$ can be written in the form
\begin{equation*}
q_r=\cos\left(\frac{\theta}{2}\right)+\hat{q}_r\sin\left(\frac{\theta}{2}\right),
\end{equation*}
and $q_b^*=\overline{q_b}$ can be written in the form
\begin{align*}
q_b&=\cos\left(i\frac{\phi}{2}\right)+\hat{q}_b\sin\left(i\frac{\phi}{2}\right)\\
&=\cosh\left(\frac{\phi}{2}\right)+i\hat{q}_b\sinh\left(\frac{\phi}{2}\right).
\end{align*}

If $v\in\mathbb{R}^4$ represents a space-time event then a proper
Lorentz transformation can be written in terms of a biquaternion
$q\in\mathbb{H}(\mathbb{C})$ as
\[T_q(v):=qv\overline{q}^*\quad\text{with}\quad Q(1)=1.\]
The constraint
\[Q(q)=1,\]
that implies the following two conditions:
\begin{align*}
\left|\alpha\right|^2-\left|\beta\right|^2&=1,\\
\left\langle \alpha,\beta\right\rangle&=0,
\end{align*}
ensures that this transformation is 6-dimensional. Besides, Proposition \ref{PropBilinearForm} shows that this transformation preserves the value of the bilinear form and so must be a Lorentz transformation.

Then, by equation (\ref{morphism}) we have that
\begin{equation}
T_q=T_{q_r q_b}=T_{q_r}\circ T_{q_b} .
\end{equation}

As $q_r=q_r^*$, then $q_r\in\mathbb{H}$ and the transformation $T_{q_r}$ describes a spatial rotation. On the other side, observe that
\begin{equation*}
T_{q_b}(v)=v_0\cosh(\phi)+\langle\vec{v},\hat{q}_b\rangle\sinh(\phi)+i\vec{v}+2i\langle\vec{v},\hat{q}_b\rangle\sinh^2\left(\frac{\phi}{2}\right)\hat{q}_b+iv_0\sinh(\phi)\hat{q}_b.
\end{equation*}

If $v'=T_{q_b}(v)=v_0'+i\vec{v}\,'$, then
\begin{align*}
v_0'&=v_0\cosh(\phi)+\langle\vec{v},\hat{q}_b\rangle\sinh(\phi)\\
\vec{v}\,'&=\vec{v}+2\langle\vec{v},\hat{q}_b\rangle\sinh^2\left(\frac{\phi}{2}\right)\hat{q}_b+v_0\sinh(\phi)\hat{q}_b,
\end{align*}
which is the standard form of a pure Lorentz transformation (a ``boost'').

Furthermore, the corresponding matrix takes the form:
\[[T_{q_b}]_\gamma=\left(\begin{array}{cccc}
\cosh(\phi)&-i\sinh(\phi)&0&0\\
i\sinh(\phi)&\cosh(\phi)&0&0\\
0&0&1&0\\
0&0&0&1\\
\end{array}\right)\]

Hence, the decomposition $q=q_rq_b$ corresponds to the decompostion of a Lorentz transformation $T_q$ as a spatial rotation $T_{q_r}$ and a Lorentz boost $T_{q_b}$.
\subsubsection{Orientation preserving isometries of $H^3$}

We now restrict our study to $M$. If $v\in M$ then $v=v_0+i\Vec{v}$ and $Q(v)=1$, i.e.

\[1=(v_0+i\Vec{v})(v_0-i\Vec{v})=(v_0)^2-||\Vec{v}||^2\] 

hence is the two sheeted hyperboloid of revolution, if we restrict to a sheet we can identify $M$ with $H^3$ the hyperbolic space.
For $v,w\in H^3$, such that $v=\cosh(\rho_1)+i\hat{r}_1\sinh(\rho_1)$ and $w=\cosh(u\rho_2)+i\hat{r}_2\sinh(\rho_2)$ we have that
\begin{align*}
v\overline{w}&=\cosh(\rho_1)\cosh(\rho_2)-\langle\hat r_1,\hat r_2\rangle\sinh(\rho_1)\sinh(\rho_2)\\
&+i\hat{r}_1\sinh(\rho_1)\cosh(\rho_2)-i\hat{r}_2\cosh(\rho_1)\sinh(\rho_2)-[\hat r_1,\hat r_2]\sinh(\rho_1)\sinh(\rho_2),\notag\\
\end{align*}

Hence, from the definition of distance given in section \ref{sec:distance},\[d(v,w)=\text{arccosh}[\cosh(\rho_1)\cosh(\rho_2)-\langle\hat r_1,\hat r_2\rangle\sinh(\rho_1)\sinh(\rho_2)]\]
which is the hyperbolic distance on $H^3$.

\begin{remark}\label{cosineremark1} In this case Equation (\ref{cosinelaw}) becomes the well known law of cosines for hyperbolic triangles.
\end{remark}
Moreover, for every $q$ such that $Q(q)=1$,  $T_q$ is an isometry of $(H^3,d_{hyperbolic})$, so we have recover the group of orientation preserving isometries. If we write $T_q=T_{q_r}\circ T_{q_b}$, then $T_{q_r}$ is a rotation and  $T_{q_b}$ is a hyperbolic translation. The hole picture is summarized in the following illustrative figure.

\begin{figure}[h]
\centering

\begin{tikzpicture}[scale=.6]

%%\draw[help lines] (-6,-6) grid (6,6);
\draw [thick,->]  (0,-6) -- (0,6);
\draw [thick] (-3,2) -- (6,2) -- (3,-2) -- (-6,-2) -- (-3,2);

\draw [thick] (0,0) -- (.5,2.05);
\draw [thick] (0,0) -- (2.3,3.05);

 \draw [thick,->] (-1.4,-4) arc(200:340:1.5cm and .8cm);

\draw (0,4.9) ellipse (4.5cm and 1.5cm);
\draw [thick] plot[domain=-2.5:2.5] ({sinh(\x)},{cosh(\x)});

\draw [thick,->] plot[domain=0.25:1] ({2*sinh(\x)},{2*cosh(\x)});

%\node [right] at (.2,-4.8) {$q_r=\cos(\theta/2)+\sin(\theta/2)\hat{q_r}$};
\node at (.5,-4.2)  {$\theta$};
\draw [thick] (-1.4,-4) -- (0,-3.5) -- (1.4,-4);
%\node [right] at  (4,3)  {$q_b=\cos(\phi/2)+i\sinh(\phi/2)\hat{q_b}$};
\node at (1.2,2.7)  {$\phi$};
\node at (5,1.5)  {$\mathbb{R}^3$};
\node at (-5,3)  {$\mathbb{R}^4$};
 
\node at (-2,3)  {$H^3$};
\end{tikzpicture}
\caption{Action of $q=q_rq_b$, where $q_r=\cos(\theta/2)+\sin(\theta/2)\hat{q}_r$ and $q_b=\cos(\phi/2)+i\sinh(\phi/2)\hat{q}_b$.}

\end{figure}
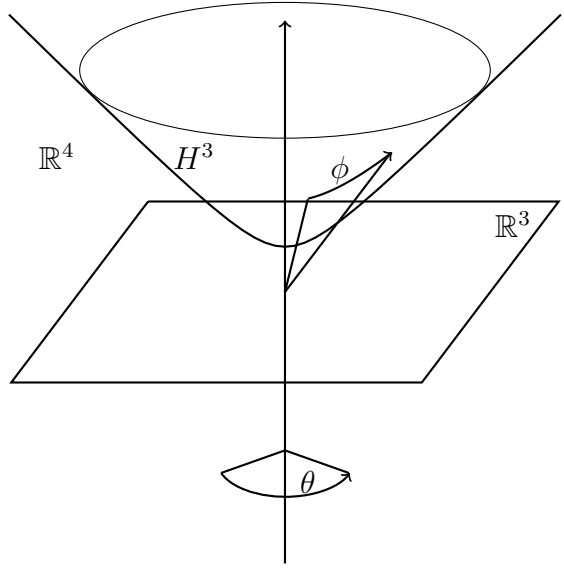

\subsection{Dual quaternions}

For $q\in\mathbb{H}(\mathbb{D})$:
\begin{align*}
q\overline{q}&=\overline{q}q=\sum_{k=0}^3q_k^2=\left|\alpha\right|^2+\epsilon\left(\alpha\overline{\beta}+\beta\overline{\alpha}\right)\notag\\
&=\left|\alpha\right|^2+2\epsilon\left\langle \alpha,\beta\right\rangle\in\mathbb{D},
\end{align*}

For $z\in\mathbb{D}$, using Taylor's series one obtains
\begin{align*}
\cos(z)&=\cos(a+\epsilon b)=\cos(a)\cos(\epsilon b)-\sin(a)\sin(\epsilon b)\\
&=\cos(a)-\epsilon\sin(a)b,\\
\sin(z)&=\sin(a+\epsilon b)=\sin(a)\cos(\epsilon b)+\cos(a)\sin(\epsilon b)\\
&=\sin(a)+\epsilon\cos(a)b.
\end{align*}

In particular, we have that

\[\cos(\varepsilon b)=1\]
\[\sin(\varepsilon b)=\varepsilon b\]

\subsubsection{Galilean four dimensional space}

In this case observe that if $v\in\mathbb{R}^4$  then 
\[Q(v)=Q(v_0+\varepsilon\Vec{v})=(v_0+\varepsilon \Vec{v})(v_0-\varepsilon\Vec{v})=(v_0)^2\]
hence $\mathbb R^4$ has a degenerate metric which is called the {\bf Galilean space}.

We proved in Theorem \ref{decomposition} that any unit-norm dual quaternion $q\in\mathbb{H}(\mathbb{D})$ can be decomposed into the form
\begin{equation*}
	q=q_r q_b,
\end{equation*}
where $q_r=q_r^*$ can be written in the form
\begin{equation*}
	q_r=\cos\left(\frac{\theta}{2}\right)+\hat{q}_r\sin\left(\frac{\theta}{2}\right),
\end{equation*}
and $q_b^*=\overline{q_b}$ can be written in the form
\begin{align*}
	q_b&=\cos\left(\varepsilon\frac{\phi}{2}\right)+\hat{q}_b\sin\left(\varepsilon\frac{\phi}{2}\right)\\
	&=1+\varepsilon\hat{q}_b\frac{\phi}{2}.
\end{align*}

If $v\in\mathbb{R}^4$, then a  Galilean symmetry  can be written in terms of a dual quaternion
$q\in\mathbb{H}(\mathbb{D})$ as
\[T_q(v):=qv\overline{q}^*\quad\text{with}\quad Q(1)=1.\]
The constraint
\[Q(q)=1,\]
implies the following two conditions:
\begin{align*}
\left|\alpha\right|^2&=1,\\
\left\langle \alpha,\beta\right\rangle&=0.
\end{align*}

Besides, Proposition \ref{PropBilinearForm} shows that this transformation preserves the value of the bilinear form.

Then, by equation (\ref{morphism}) we have that
\begin{equation}
T_q=T_{q_r q_b}=T_{q_r}\circ T_{q_b} .
\end{equation}

As $q_r=q_r^*$, then $q_r\in\mathbb{H}$ and the transformation $T_{q_r}$ describes a spatial rotation. On the other side, observe that
\begin{equation}
	T_{q_b}(v)=v_0+\varepsilon\left(\vec{v}+ v_0\phi\hat{q}_b\right).
\end{equation}

If $v'=T_{q_b}(v)=v_0'+\varepsilon\vec{v}\,'$, then
\begin{align*}
v_0'&=v_0\\
\vec{v}\,'&=\vec{v}+ v_0\phi\hat{q}_b,
\end{align*}
which represents a shear.

Furthermore, the corresponding matrix takes the form:

\[[T_{q_b}]_\gamma=\left(\begin{array}{cccc}
1&-\varepsilon\phi&0&0\\
\varepsilon\phi&1&0&0\\
0&0&1&0\\
0&0&0&1\\
\end{array}\right)\]

Hence, the decomposition $q=q_rq_b$ corresponds to the decompostion of a Galilean symmetry  $T_q$ as a spatial rotation $T_{q_r}$ and a shear $T_{q_b}$.

\subsubsection{Orientation preserving isometries of $E^3$}

We now restrict our study to $M$. If $v\in M$ then $v=v_0+\varepsilon\Vec{v}$ and $Q(v)=1$, i.e.

\[1=(v_0+\varepsilon\Vec{v})(v_0-\varepsilon\Vec{v})=(v_0)^2\] 
hence $M$ is the union of the two linear spaces defined by equations $v_0=\pm1$, if we restrict to the plane $v_0=1$ we can identify $M$ with $E^3$ the Euclidean space.

For $v,w\in M$, such that $v=1+\varepsilon\hat{r}_1\rho_1$ and $w=1+\varepsilon\hat{r}_2\rho_2$, we have that
\begin{align*}
v\overline{w}=&1+\hat{r}_1\varepsilon\rho_1-\hat{r}_2\varepsilon\rho_2=1+\varepsilon(\hat{r}_1\rho_1-\hat{r}_2\rho_2).
\end{align*}
Observe that if $||\hat{r}_1\rho_1-\hat{r}_2\rho_2||\neq 0$ 
\[\hat{r}_1\rho_1-\hat{r}_2\rho_2={||\hat{r}_1\rho_1-\hat{r}_2\rho_2||}\frac{\hat{r}_1\rho_1-\hat{r}_2\rho_2}{||\hat{r}_1\rho_1-\hat{r}_2\rho_2||}\]
and that
\[\left(\frac{\hat{r}_1\rho_1-\hat{r}_2\rho_2}{||\hat{r}_1\rho_1-\hat{r}_2\rho_2||}\right)^2=-1\]
therefore,  from the definition of distance given in section \ref{sec:distance},

 \[d(v,w)=||\hat{r}_1\rho_1-\hat{r}_2\rho_2||\]

which is the Euclidean distance on $E^3$.

\begin{remark} \label{cosineremark2}Observe that if $u\overline{v}=1+\varepsilon\hat{r_1}\rho_1$ and $v\overline{w}=1+\varepsilon\hat{r_2}\rho_2$, so that $d(u,v)=|\rho_1|$ and $d(u,v)=|\rho_2|$, then 

\begin{eqnarray*}
u\overline{w}&=&1+\varepsilon(\hat{r}_1\rho_1+\hat{r}_2\rho_2)\\
	&=&1+\varepsilon{||\hat{r}_1\rho_1+\hat{r}_2\rho_2||}\frac{\hat{r}_1\rho_1-\hat{r}_2\rho_2}{||\hat{r}_1\rho_1+\hat{r}_2\rho_2||}
\end{eqnarray*}
hence, \[d(u,w)=||\hat{r}_1\rho_1+\hat{r}_2\rho_2||\leq|\rho_1|+|\rho_2|=d(u,v)+d(u,w)\]
\end{remark}
Moreover, for every $q$ such that $Q(q)=1$,  $T_q$ is an isometry of $(E^3,d_{euclidean})$, so we have recover the group of orientation preserving isometries. If we write $T_q=T_{q_r}\circ T_{q_b}$, then $T_{q_r}$ is a rotation and  $T_{q_b}$ is a  translation. The hole picture is summarized in the following illustrative figure.

\begin{figure}[h]
\centering

\begin{tikzpicture}[scale=.6]

%%\draw[help lines] (-6,-6) grid (6,6);
\draw [thick,->]  (0,-6) -- (0,6);
\draw [thick] (-3,2) -- (6,2) -- (3,-2) -- (-6,-2) -- (-3,2);

\draw [thick] (0,0) -- (.5,3);
\draw [thick] (0,0) -- (1.5,1.5);
\draw [thick,->] (.5,3) -- (1.5,1.5);

 \draw [thick,->] (-1.4,-4) arc(200:340:1.5cm and .8cm);

\draw [thick] (-3,4) -- (6,4) -- (3,-0) -- (-6,-0) -- (-3,4);

%\node [right] at (.2,-4.8) {$q_r=\cos(\theta/2)+\sin(\theta/2)\hat{q_r}$};
\node at (.5,-4.2)  {$\theta$};
\draw [thick] (-1.4,-4) -- (0,-3.5) -- (1.4,-4);
%\node at (3,3)  {$q_b=1+\varepsilon\hat{q_b}\frac{\phi}{2}$};
\node at (1.2,2.7)  {$\phi$};
\node at (5,1.5)  {$\mathbb{R}^3$};
\node at (-5,3)  {$\mathbb{R}^4$};
 
\node at (-2,3)  {$E^3$};
\end{tikzpicture}
\caption{Action of $q=q_rq_b$, where $q_r=\cos(\theta/2)+\sin(\theta/2)\hat{q}_r$ and $q_r=1+\varepsilon\hat{q}_b\frac{\phi}{2}$.}

\end{figure}
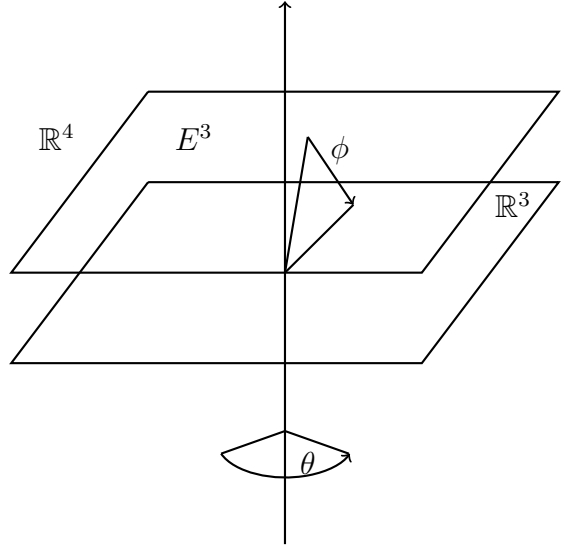

\subsection{Split biquaternions}

For $q\in\mathbb{H}(\mathbb{S})$:
\begin{align*}
q\overline{q}&=\overline{q}q=\sum_{k=0}^3q_k^2=\left|\alpha\right|^2+\left|\beta\right|^2+j\left(\alpha\overline{\beta}+\beta\overline{\alpha}\right)\notag\\
&=\left|\alpha\right|^2+\left|\beta\right|^2+2j\left\langle \alpha,\beta\right\rangle\in\mathbb{S}.
\end{align*}

For $z\in\mathbb{S}$, using Taylor's series one obtains
\begin{align*}
\cos(z)&=\cos(a+jb)=\cos(a)\cos(jb)-\sin(a)\sin(jb)\\
&=\cos(a)\cos(b)-j\sin(a)\sin(b),\\
\sin(z)&=\sin(a+jb)=\sin(a)\cos(jb)+\cos(a)\sin(jb)\\
&=\sin(a)\cos(b)+j\cos(a)\sin(b).
\end{align*}

In particular, we have that

\[\cos(jb)=\cos(b)\]
\[\sin(jb)=j \sin(b)\]

\subsubsection{ Rotations in Euclidean four dimensional space}

In this case observe that if $v\in\mathbb{R}^4$  then 
\[Q(v)=Q(v_0+j\Vec{v})=(v_0+j\Vec{v})(v_0-j\Vec{v})=(v_0)^2+||\Vec{v}||^2\]
hence {\bf$\mathbb R^4$ is the four dimensional Euclidean space}.

We proved in Theorem \ref{decomposition} that any unit-norm split biquaternion $q\in\mathbb{H}(\mathbb{S})$ can be decomposed into the form
\begin{equation*}
q=q_r q_b,
\end{equation*}
where $q_r=q_r^*$ can be written in the form
\begin{equation*}
q_r=\cos\left(\frac{\theta}{2}\right)+\hat{q}_r\sin\left(\frac{\theta}{2}\right),
\end{equation*}
and $q_b^*=\overline{q_b}$ can be written in the form
\begin{align*}
q_b&=\cos\left(j\frac{\phi}{2}\right)+\hat{q}_b\sin\left(j\frac{\phi}{2}\right)\\
&=\cos\left(\frac{\phi}{2}\right)+j\hat{q}_b\sin\left(\frac{\phi}{2}\right).
\end{align*}

If $v\in\mathbb{R}^4$ a rotation can be written in terms of a split biquaternion
$q\in\mathbb{H}(\mathbb{S})$ as
\[T_q(v):=qv\overline{q}^*\quad\text{with}\quad Q(1)=1.\]
The constraint
\[Q(q)=1,\]
implies the following two conditions:
\begin{align*}
	\left|\alpha\right|^2+	\left|\beta\right|^2&=1,\\
	\left\langle \alpha,\beta\right\rangle&=0.
\end{align*}

Besides, Proposition \ref{PropBilinearForm} shows that this transformation preserves the value of the bilinear form.

Then, by equation (\ref{morphism}) we have that
\begin{equation}
	T_q=T_{q_r q_b}=T_{q_r}\circ T_{q_b} .
\end{equation}

As $q_r=q_r^*$, then $q_r\in\mathbb{H}$ and the transformation $T_{q_r}$ describes a simple rotation. On the other side, observe that
\begin{equation}
T_{q_b}(v)=v_0\cos(\phi)-\langle\vec{v},\hat{q}_b\rangle\sin(\phi)+j\vec{v}-2j\langle\vec{v},\hat{q}_b\rangle\sin^2\left(\frac{\phi}{2}\right)\hat{q}_b+jv_0\sin(\phi)\hat{q}_b.
\end{equation}
If $v'=T_{q_b}(v)=v_0'+j\vec{v}\,'$, then
\begin{align*}
	v_0'&=v_0\cos(\phi)-\langle\vec{v},\hat{q}_b\rangle\sin(\phi),\\
	\vec{v}\,'&=\vec{v}-2\langle\vec{v},\hat{q}_b\rangle\sin^2\left(\frac{\phi}{2}\right)\hat{q}_b+v_0\sin(\phi)\hat{q}_b,
\end{align*}
which represents another simple rotation.

Furthermore, the corresponding matrix takes the form:

\[[T_{q_b}]_\gamma=\left(\begin{array}{cccc}
\cos(\phi)&-j\sin(\phi)&0&0\\
j\sin(\phi)&\cos(\phi)&0&0\\
0&0&1&0\\
0&0&0&1\\
\end{array}\right)\]

Hence, the decomposition $q=q_rq_b$ corresponds to the decompostion of a four dimensional rotation $T_q$ as two simple rotations rotation $T_{q_r}$ and  $T_{q_b}$, moreover this is a construction of a double cover $S^3\times S^3\rightarrow SO(4)$.

\subsubsection{Orientation preserving isometries of $S^3$}

We now restrict our study to $M$. If $v\in M$ then $v=v_0+i\Vec{v}$ and $Q(v)=1$, i.e.

\[1=(v_0+j\Vec{v})(v_0-j\Vec{v})=(v_0)^2+||\Vec{v}||^2\] 

hence, $M$ is $S^3$ the sphere of dimension 3.

For $v,w\in M$, such that $v=\cos(\rho_1)+j\hat{r}_1\sin(\rho_1)$ and $w=\cos(u\rho_2)+j\hat{r}_2\sin(\rho_2)$ we have that
\begin{align*}
	v\overline w&=\cos(\rho_1)\cos(\rho_2)+\langle\hat r_1,\hat r_2\rangle\sin(\rho_1)\sin(\rho_2)\\
&+j\hat{r}_1\sin(\rho_1)\cos(\rho_2)-j\hat{r}_2\cos(\rho_1)\sin(\rho_2)-\notag\\
	&\quad-[\hat r_1,\hat r_2]\sin(\rho_1)\sin(\rho_2).
\end{align*}

Hence, from the definition of distance given in section \ref{sec:distance}\[d(v,w)=\text{arccos}[\cos(\rho_1)\cos(\rho_2)+\langle\hat r_1,\hat r_2\rangle\sin(\rho_1)\sin(\rho_2)]\]
which is the spherical distance on $S^3$.

\begin{remark}\label{cosineremark3} In this case Equation (\ref{cosinelaw}) becomes the well known law of cosines for spherical  triangles.
\end{remark}
Moreover, for every $q$ such that $Q(q)=1$,  $T_q$ is an isometry of $(S^3,d_{round})$, so we have recover the group of orientation preserving isometries. If we write $T_q=T_{q_r}\circ T_{q_b}$, then $T_{q_r}$  and  $T_{q_b}$ are simple rotations. The hole picture is summarized in the following illustrative figure.

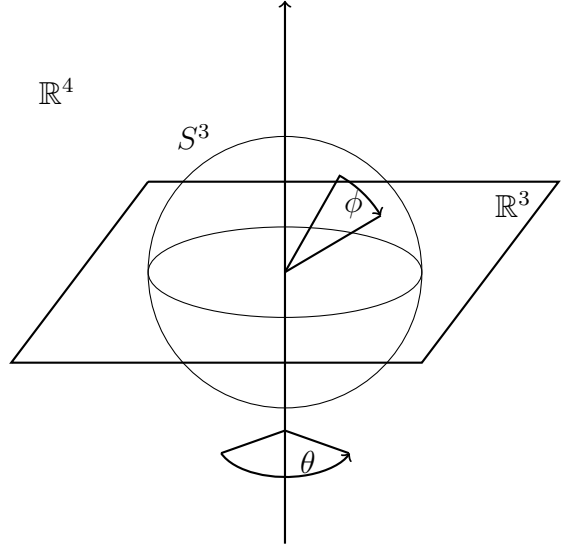
\begin{figure}[h]
\centering

\begin{tikzpicture}[scale=.6]

\node at (-5,4)  {$\mathbb{R}^4$};
%\draw[help lines] (-6,-6) grid (6,6);
\draw [thick,->]  (0,-6) -- (0,6);
\draw [thick] (-3,2) -- (6,2) -- (3,-2) -- (-6,-2) -- (-3,2);

\draw [thick] (0,0) -- (2.1,1.25);
\draw [thick] (0,0) -- (1.21,2.15);
\draw (0,0) ellipse (3cm and 1cm);
\draw (0,0) circle(3);
\draw [thick,<-] (2.1,1.25)  arc [radius=2.5, start angle=31, end angle=60];
 \draw [thick,->] (-1.4,-4) arc(200:340:1.5cm and .8cm);

%\node [right] at (.2,-4.8) {$q_r=\cos(\theta/2)+\sin(\theta/2)\hat{q_r}$};
\node at (.5,-4.2)  {$\theta$};
\draw [thick] (-1.4,-4) -- (0,-3.5) -- (1.4,-4);
%\node at (3,3)  {$q_b=\cos(\phi/2)+j\sin(\phi/2)\hat{q_b}$};
\node at (1.5,1.5)  {$\phi$};
\node at (5,1.5)  {$\mathbb{R}^3$};
 
\node at (-2,3)  {$S^3$};
\end{tikzpicture}
\caption{Action of $q=q_rq_b$, where $q_r=\cos(\theta/2)+\sin(\theta/2)\hat{q}_r$ and $q_b=\cos(\phi/2)+j\sin(\phi/2)\hat{q}_b$.}

\end{figure}

\section*{Acknowledgments}
 The first author acknowledges partial support from CONACyT under grant 256126.

\end{document}